\documentclass[a4paper]{amsart}
\usepackage{amsmath, amsthm, amsfonts, mathtools, tikz-cd, amssymb}
\usepackage{enumitem}
\usepackage{hyperref}

\title[A note on finiteness properties of vertex stabilisers]{A note on finiteness properties of \\ vertex stabilisers}
\date{February~20, 2025}

\makeatletter
\@namedef{subjclassname@2020}{%
\textup{2020} Mathematics Subject Classification}
\makeatother
\subjclass[2020]{20J05, 57M07, 20F65}
\keywords{Finiteness properties, vertex stabilisers}

\author[K.~Li]{Kevin Li}
\address{Fakult\"at f\"ur Mathematik, Universit\"at Regensburg, 93040 Regensburg, Germany}
\email{kevin.li@ur.de}

\author[L.~S\'anchez Salda\~na]{Luis Jorge S\'anchez Salda\~na}
\address{Departamento de Matem\'aticas, Facultad de Ciencias, Universidad Nacional Aut\'onoma de M\'exico, 04510 Ciudad de M\'exico, Mexico}
\email{luisjorge@ciencias.unam.mx}

\theoremstyle{definition}
\newtheorem{defn}{Definition}[section]
\newtheorem{ex}[defn]{Example}
\newtheorem{rem}[defn]{Remark}
\newtheorem{question}[defn]{Question}
\newtheorem*{ack}{Acknowledgements}
\theoremstyle{plain}
\newtheorem{thm}[defn]{Theorem}
\newtheorem{lem}[defn]{Lemma}
\newtheorem{prop}[defn]{Proposition}
\newtheorem{cor}[defn]{Corollary}

\newcommand{\enum}{\rm{(\roman*)}}
\newcommand{\spann}[1]{{\ensuremath \langle{#1}\rangle}}

\newcommand{\IN}{\ensuremath{\mathbb{N}}}
\newcommand{\IR}{\ensuremath{\mathbb{R}}}
\newcommand{\IZ}{\ensuremath{\mathbb{Z}}}
\newcommand{\sfF}{\ensuremath{\mathsf{F}}}
\newcommand{\sfFP}{\ensuremath{\mathsf{FP}}}
\newcommand{\calC}{\ensuremath{\mathcal{C}}}
\newcommand{\calH}{\ensuremath{\mathcal{H}}}
\newcommand{\calR}{\ensuremath{\mathcal{R}}}

\DeclareMathOperator{\Ind}{Ind}
\DeclareMathOperator{\lk}{Lk}
\DeclareMathOperator{\st}{St}

\begin{document}

\begin{abstract}
	We prove a criterion for the geometric and algebraic finiteness properties of vertex stabilisers of $G$-CW-complexes, given the finiteness properties of the group~$G$ and of the stabilisers of positive dimensional cells.
	This generalises a result of Haglund--Wise for groups acting on trees to higher dimensions.
	As an application, for~$n\ge 2$, we deduce the existence of uncountably many quasi-isometry classes of one-ended groups that are of type~$\mathsf{FP}_n$ and not of type~$\mathsf{FP}_{n+1}$.
\end{abstract}

\maketitle

\section{Introduction}
Finiteness properties of discrete groups originate from and strengthen the fundamental notions of being finitely generated and being finitely presented.
The study of higher finiteness properties uses a multitude of methods, e.g., from equivariant topology, homological algebra, and geometric group theory. 
	
Let~$n\in \IN$.
A group~$G$ is said to be \emph{of type~$\sfF_n$} if there exists a $G$-CW-model for~$EG$ (i.e., a contractible free $G$-CW-complex) with cocompact $n$-skeleton.
For example, every group is of type~$\sfF_0$, and a group~$G$ is of type~$\sfF_1$ (resp.~$\sfF_2$) if and only if~$G$ is finitely generated (resp.\ finitely presented).
Finite groups are of type~$\sfF_n$ for all~$n\in \IN$.
The following is a classical combination theorem.

\begin{thm}[\cite{Brown87}]
\label{thm:combination}
	Let~$G$ be a group, let~$X$ be a $G$-CW-complex, and let $n\in \IN$.
	Suppose that the following hold:
	\begin{enumerate}[label=\enum]
		\item $X$ is $(n-1)$-connected;
		\item The quotient~$G\backslash X^{(n)}$ of the $n$-skeleton is compact;
		\item For all~$i\in \{0,\ldots,n-1\}$, all stabilisers of~$i$-cells of~$X$ are of type~$\sfF_{n-i}$.
	\end{enumerate}
	Then~$G$ is of type~$\sfF_n$.
\end{thm}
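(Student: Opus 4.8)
The plan is to produce, out of the given data, an $(n-1)$-connected free $G$-CW-complex $Y$ whose $n$-skeleton is cocompact; this suffices, because any such complex can be completed to a $G$-CW-model for $EG$ by attaching free $G$-cells in dimensions $\ge n+1$ to kill the higher homotopy groups, which leaves the $n$-skeleton (and hence its cocompactness) untouched, so that $G$ is of type $\sfF_n$. Thus the entire problem is to build $Y$ together with a $G$-map $p\colon Y\to X$ that is a (non-equivariant) weak homotopy equivalence: then $(n-1)$-connectedness of $Y$ follows from assumption (i), while cocompactness of $Y^{(n)}$ will be arranged by a cell count.

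To construct $Y$ I would replace each cell of $X$ by a free $G$-space with contractible fibres. Using assumption (iii), for each orbit representative $\sigma$ of an $i$-cell with $i\le n$ I choose a model for $EG_\sigma$ with cocompact $(n-i)$-skeleton: for $i\le n-1$ this is exactly type $\sfF_{n-i}$, while for $i=n$ only type $\sfF_0$ is needed, which every group has; for $i>n$ I take any model for $EG_\sigma$. Building $Y$ by induction over the skeleta $X^{(i)}$, I glue in, over each $i$-cell orbit, a copy of $G\times_{G_\sigma}\bigl(EG_\sigma\times e^i\bigr)$, attached along $G\times_{G_\sigma}\bigl(EG_\sigma\times S^{i-1}\bigr)$. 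The result is free, and since each cell of $X$ is covered with contractible fibre $EG_\sigma$, the gluing lemma shows that $p$ is a weak equivalence, so $Y\simeq X$ is $(n-1)$-connected. Finally, a $j$-cell of $EG_\sigma$ contributes a $(j+i)$-cell of $Y$ over $\sigma$, so the cells of $Y^{(n)}$ lying over $\sigma$ correspond to cells of $EG_\sigma$ in dimensions $\le n-i$; by the choice of skeleta there are only finitely many orbits of these, and none at all once $i>n$, whence $G\backslash Y^{(n)}$ is compact.

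The main obstacle is the inductive gluing step: to attach the piece over an $i$-cell I must lift its attaching map $G\times_{G_\sigma}S^{i-1}\to X^{(i-1)}$ through the partially built weak equivalence $Y_{i-1}\to X^{(i-1)}$ (where $Y_{i-1}$ denotes the part of $Y$ already constructed over $X^{(i-1)}$) to a $G$-map $G\times_{G_\sigma}\bigl(EG_\sigma\times S^{i-1}\bigr)\to Y_{i-1}$, $G$-equivariantly and compatibly between stages. This is precisely where freeness and contractibility are used: because the source is a free $G$-CW-complex and $EG_\sigma$ is contractible, equivariant obstruction theory guarantees that such a lift exists and is unique up to $G$-homotopy, so the construction is well defined and the gluing lemma applies at each stage. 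Conceptually this realises $Y$ as a homotopy colimit over the poset of cells of $X$ of the classifying spaces $EG_\sigma$, which is what forces $p$ to be a weak equivalence; verifying the equivariant lifting and this weak-equivalence claim carefully is the technical heart of the argument, whereas the connectivity statement and the finiteness bookkeeping are then routine.
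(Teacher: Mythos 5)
Your proposal is correct and follows essentially the same route as the paper, which derives Theorem~\ref{thm:combination} as a direct consequence of the Haefliger blow-up (Theorem~\ref{thm:blow-up}): your complex $Y$ is exactly the blow-up $\widehat{X}$ built from models of $EG_\sigma$ with cocompact $(n-i)$-skeleta, followed by the same cell count and the completion to a model for $EG$ by attaching cells of dimension $\ge n+1$. The only difference is that you reconstruct the blow-up by hand (including the equivariant lifting of attaching maps and the gluing lemma) rather than citing it.
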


In this note, we prove a criterion for the finiteness properties of vertex stabilisers. We consider simplicial $G$-CW-complexes (i.e., $G$-CW-complexes whose underlying CW-complex is a simplicial complex) and impose conditions on links of vertices.

\begin{thm}
	\label{thm:main Fn}
	Let~$G$ be a group, let~$X$ be a simplicial $G$-CW-complex, and let $n\in \IN$.
	Suppose that the following hold:
	\begin{enumerate}[label=\enum]
		\item $X$ is $n$-connected;
		\item The quotient~$G\backslash X^{(n+1)}$ of the $(n+1)$-skeleton is compact;
		\item For all~$i\in \{1,\ldots,n\}$, all stabilisers of $i$-simplices of~$X$ are of type~$\sfF_{n+1-i}$;
		\item\label{item:main link} For every 0-simplex~$v$ of~$X$, every connected component of the link of~$v$ in~$X$ is simply-connected; 
		\item $G$ is of type~$\sfF_n$.
	\end{enumerate}
	Then all stabilisers of 0-simplices of~$X$ are of type~$\sfF_n$.
\end{thm}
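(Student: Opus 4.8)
The plan is to treat the two ingredients of type~$\sfF_n$ separately: the homological property~$\sfFP_n$, which I would obtain by a dimension shift on the equivariant chain complex of~$X$ and which does \emph{not} use~\ref{item:main link}, and finite presentability, which is exactly where~\ref{item:main link} enters.

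Fix a $0$-simplex~$v$ and put $H=G_v$. It is standard that $H$ is of type~$\sfFP_n$ if and only if the permutation module $\IZ[G/H]$ is of type~$\sfFP_n$ over~$\IZ G$, and $\IZ[G/H]$ is a direct summand of $C_0(X)$, so it suffices to show that $C_0(X)$ is of type~$\sfFP_n$. I would consider the augmented cellular $\IZ G$-chain complex
\[
C_{n+1}(X)\xrightarrow{\partial_{n+1}} C_n(X)\to\cdots\to C_0(X)\xrightarrow{\varepsilon}\IZ\to 0,
\]
where $C_i(X)\cong\bigoplus_{\sigma}\IZ[G/G_\sigma]$, the sum ranging over orbit representatives of $i$-simplices; by~(ii) this sum is finite, so each $C_i(X)$ ($i\le n+1$) is finitely generated, and by~(i) the complex is exact in degrees~$\le n$. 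Writing $K_i=\ker(\partial_i)$, this gives short exact sequences $0\to K_0\to C_0(X)\to\IZ\to 0$ and $0\to K_{i+1}\to C_{i+1}(X)\to K_i\to 0$ for $0\le i\le n-1$, while $K_n=\operatorname{im}(\partial_{n+1})$ is finitely generated as a quotient of~$C_{n+1}(X)$. Now~(iii) gives that $C_i(X)$ is of type~$\sfFP_{n+1-i}$ for $1\le i\le n$, and~(v) gives that $\IZ$ is of type~$\sfFP_n$. Feeding these into the standard three-term behaviour of the properties~$\sfFP_m$ and inducting downward ($K_n$ of type~$\sfFP_0$, then $K_{n-1}$ of type~$\sfFP_1$, \ldots, then $K_0$ of type~$\sfFP_n$) yields that $C_0(X)$, and hence~$H$, is of type~$\sfFP_n$.

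Since a group is of type~$\sfF_n$ precisely when it is of type~$\sfFP_n$ and, for $n\ge 2$, finitely presented (and $\sfF_0=\sfFP_0$, $\sfF_1=\sfFP_1$), it remains to prove that $H$ is finitely presented when $n\ge 2$. Here I would use that $H$ acts cocompactly on the link $L=\lk(v)$: as stabilisers fix cells pointwise, every simplex of~$X$ containing~$v$ is fixed by its stabiliser, so the $H$-stabiliser of a $k$-simplex of~$L$ equals the $G$-stabiliser of the corresponding $(k+1)$-simplex through~$v$, and cocompactness of~$L$ follows from~(ii). By~(iii) the vertex stabilisers of~$L$ are of type~$\sfF_n$, hence finitely presented, and the edge stabilisers are finitely generated. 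By~\ref{item:main link} every connected component~$C$ of~$L$ is simply connected, so its setwise stabiliser~$H_C\le H$ acts cocompactly on the simply-connected complex~$C$ with finitely presented vertex stabilisers and finitely generated edge stabilisers; Theorem~\ref{thm:combination} with $n=2$ then shows that each~$H_C$ is finitely presented.

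The remaining step---assembling the finitely presented component stabilisers~$H_C$ into a finite presentation of~$H$---is the main obstacle. The components of~$L$ are permuted by~$H$ with finitely many orbits, but distinct components are disjoint in~$L$, so the data relating different component--orbits is invisible inside~$L$; in the closed star $\overline{\st}(v)=v\ast L$ all components are joined only through the fixed point~$v$, whose stabiliser is the unknown group~$H$, so no naive application of Theorem~\ref{thm:combination} to the star succeeds. The role of~\ref{item:main link} is that, the components being simply connected, no relations of~$H$ are forced locally inside a component beyond those already recorded by~$H_C$; the relations tying the components together must instead be recovered from the global simple-connectivity of~$X$ (hypothesis~(i)) together with the cocompactness~(ii). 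I expect the technical heart of the proof to be verifying that finitely many orbits of $2$-cells of~$X$ suffice, i.e.\ that simple-connectivity and cocompactness of~$X$ impose only finitely many additional relations among the~$H_C$ and the finitely many elements permuting the components, thereby producing a finite presentation of~$H$.
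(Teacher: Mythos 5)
Your first half (the $\sfFP_n$ statement for $G_v$) is correct and is essentially the paper's argument: the dimension-shifting induction on the kernels $K_i$ of the augmented cellular chain complex is Lemma~\ref{lem:chain}, and the passage from $C_0(X)$ back to the summands $\IZ[G/G_v]$ and then to $G_v$ is Lemma~\ref{lem:permutation} together with Proposition~\ref{prop:FPn}. (One small point you elide: ``a direct summand of an $\sfFP_n$ module is $\sfFP_n$'' is not immediate from the definition; it needs the complement to be of type $\sfFP_{n-1}$, which the paper supplies by an induction on $n$ over all vertex stabilisers simultaneously. The fact is true, but spell out that induction.) Your identification of the link stabilisers and the application of Theorem~\ref{thm:combination} to each component $C$ of $\lk(v)$, giving finite presentability of each setwise stabiliser $H_C$, is also correct.

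The genuine gap is exactly the step you flag as ``the main obstacle'': you never prove that $H=G_v$ is finitely presented, only that the component stabilisers $H_C$ are, and you offer no mechanism for extracting the finitely many relations tying the components together. As you yourself observe, Theorem~\ref{thm:combination} applied to the star is circular (the cone point has stabiliser $H$), and the relations of $H$ witnessed only by van Kampen diagrams in $X$ far from $v$ are not controlled by anything local. The paper's Proposition~\ref{prop:F2} resolves this with a different device, which does not assemble presentations of the $H_C$ at all: it subdivides $X$ by inserting a small sphere around $v$, applies the Haefliger blow-up (Theorem~\ref{thm:blow-up}) to obtain a simply-connected \emph{free} $G$-CW-complex $\widehat{X'}$ with cocompact $1$-skeleton, and then uses finite presentability of $G$ (Lemma~\ref{lem:deleting cells}) to extract a simply-connected $2$-dimensional $G$-cocompact subcomplex $Z$ containing the blown-up link. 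The piece $S_Z$ of $Z$ lying over the star of $v$ is a free cocompact $G_v$-complex, and the Seifert--van Kampen theorem applied to the decomposition of $Z$ into $S_Z$ and the complementary pieces, glued along the (simply-connected, by hypothesis~\ref{item:main link}) link components, shows that $\pi_1(S_Z)$ is a free factor of $\pi_1(Z)=1$. This is where hypotheses (i), (v), and \ref{item:main link} actually combine; your sketch correctly locates where they must enter but does not supply the construction that makes ``finitely many relations suffice'' precise, so the finite presentability claim remains unproved in your write-up.
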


In the case that~$X$ is 1-dimensional, Theorem~\ref{thm:main Fn} recovers and provides a different proof of a result of Haglund--Wise~\cite{Haglund-Wise21} for groups acting on trees. 

\begin{cor}[\cite{Haglund-Wise21}]
\label{cor:Haglund-Wise}
	Let~$G$ be the fundamental group of a finite graph of groups and let~$n\in \IN$.
	If~$G$ is of type~$\sfF_n$ and all edge groups are of type~$\sfF_n$, then all vertex groups are of type~$\sfF_n$.
\end{cor}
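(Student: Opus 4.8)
The plan is to realise~$G$ through Bass--Serre theory and to apply the $1$-dimensional case of Theorem~\ref{thm:main Fn}. Writing~$G$ as the fundamental group of a finite graph of groups, let~$T$ denote the associated Bass--Serre tree. Then~$T$ is a $1$-dimensional simplicial complex on which~$G$ acts without inversions, so that~$T$ is a simplicial $G$-CW-complex; moreover the quotient~$G\backslash T$ is the underlying finite graph, the vertex stabilisers are exactly the conjugates of the vertex groups, and the edge stabilisers are exactly the conjugates of the edge groups. I would apply Theorem~\ref{thm:main Fn} with~$X=T$ and the given~$n$.

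It then remains to verify the five hypotheses. Conditions~(i) and~(ii) are immediate: a tree is contractible, hence $n$-connected, and since~$T$ is $1$-dimensional we have~$T^{(n+1)}=T$, whose quotient~$G\backslash T$ is the given finite graph and is therefore compact. For condition~(iii), the only positive-dimensional simplices of~$T$ are its edges, so the requirement is vacuous for~$i\ge 2$ and reduces for~$i=1$ to the edge stabilisers being of type~$\sfF_{n+1-1}=\sfF_n$; this holds because they are conjugate to the edge groups, which are of type~$\sfF_n$ by hypothesis. Condition~(iv) holds because the link of a vertex in a graph is a discrete set of points, so each of its connected components is a single point and is in particular simply-connected. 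Finally, condition~(v) is exactly the assumption that~$G$ is of type~$\sfF_n$.

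Theorem~\ref{thm:main Fn} then yields that all stabilisers of $0$-simplices of~$T$ are of type~$\sfF_n$. Since these are precisely the conjugates of the vertex groups, and the property of being of type~$\sfF_n$ is invariant under conjugation, I conclude that all vertex groups are of type~$\sfF_n$, as required. There is no genuine analytic difficulty here: the argument is a translation of the graph-of-groups data into the $G$-action on~$T$ followed by a routine check of hypotheses. The one point worth emphasising is that condition~(iv) of Theorem~\ref{thm:main Fn}, which is the genuinely restrictive hypothesis in higher dimensions, becomes automatic in dimension~$1$ because vertex links in a graph are discrete; this is exactly the structural feature that makes the higher-dimensional statement a true generalisation of the Haglund--Wise result rather than a formal consequence of it.
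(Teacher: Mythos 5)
Your proof is correct and is exactly the argument the paper intends: the corollary is obtained by applying Theorem~\ref{thm:main Fn} to the Bass--Serre tree, where the link condition~(iv) is automatic because vertex links in a graph are discrete. Your verification of the hypotheses matches the paper's (implicit) reasoning.
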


We prove Theorem~\ref{thm:main Fn} by establishing two corresponding criteria for the algebraic finiteness properties~$\sfFP_n$ (Proposition~\ref{prop:FPn}) and for finite presentability (Proposition~\ref{prop:F2}).
The assumption~\ref{item:main link} in Theorem~\ref{thm:main Fn} is needed only for the latter.
We do know whether Theorem~\ref{thm:main Fn} remains true when the assumption~\ref{item:main link} is dropped (Question~\ref{qn:F2}).

As an application of the algebraic criterion (Proposition~\ref{prop:FPn}), for~$n\ge 2$, we deduce the existence of uncountably many quasi-isometry classes of one-ended groups that are of type~$\sfFP_n$ and not of type~$\sfFP_{n+1}$ (Theorem~\ref{thm:uncountably many}).

\begin{ack}
	We thank Clara L\"oh and Matt Zaremsky for helpful discussions.
	This work was started during a stay of the authors at the University of Oxford.
	We thank Sam Hughes and Ric Wade for their hospitality.
	
	The first author was supported by the SFB~1085 \emph{Higher Invariants} (Universit\"at Regensburg, funded by the DFG).
	The second author is grateful for the financial support of DGAPA-UNAM grant PAPIIT IA106923. 
\end{ack}

\section{Algebraic finiteness properties of vertex stabilisers}
For background on finiteness properties we refer to Brown's book~\cite[Chapter~VIII]{Brown82}.
Throughout, let~$R$ be a ring and let~$n\in \IN$.
	
An $R$-module~$M$ is said to be \emph{of type~$\sfFP_n$} if there exists a projective resolution~$P_*$ of~$M$ over~$R$ such that the projective $R$-module~$P_i$ is finitely generated for all~$i\le n$.
For example, $M$ is of type~$\sfFP_0$ (resp.~$\sfFP_1$) if and only if~$M$ is finitely generated (resp.\ finitely presented) as an $R$-module.
	
A group~$G$ is said to be \emph{of type~$\sfFP_n(R)$} if the trivial $RG$-module~$R$ is of type~$\sfFP_n$.
For example, every group is of type~$\sfFP_0(R)$, and a group~$G$ is of type~$\sfFP_1(\IZ)$ if and only if~$G$ is finitely generated.
Clearly, a group of type~$\sfF_n$ is of type~$\sfFP_n(\IZ)$.
However, there exist groups of type~$\sfFP_2(\IZ)$ that are not of type~$\sfF_2$ (i.e., not finitely presented)~\cite{Bestvina-Brady97}.
For~$n\ge 2$, a group~$G$ is of type~$\sfF_n$ if and only if~$G$ is finitely presented and of type~$\sfFP_n(\IZ)$.
	
The algebraic finiteness properties of a subgroup~$H$ of~$G$ are equivalent to the finiteness properties of the permutation $RG$-module~$R[G/H]$.
	
\begin{lem}
\label{lem:permutation}
	Let~$G$ be a group, let~$H$ be a subgroup of~$G$, and let~$n\in \IN$.
	Then the group~$H$ is of type~$\sfFP_n(R)$ if and only if the $RG$-module~$R[G/H]$ is of type~$\sfFP_n$.
	\begin{proof}
		The induction functor~$\Ind_H^G$ preserves exactness, projectivity, and finite generation.
		Hence, if the $RH$-module~$R$ is of type~$\sfFP_n$, then the $RG$-module~$R[G/H]\cong \Ind_H^G R$ is of type~$\sfFP_n$.
		Conversely, suppose that the $RG$-module~$R[G/H]$ is of type~$\sfFP_n$.
		By induction on~$n$, we may assume  that the group~$H$ is of type~$\sfFP_{n-1}(R)$.
		 Then there exists an exact sequence of~$RH$-modules
		\[
			0\to K\to P_{n-1}\to P_{n-2}\to \cdots\to P_0\to R\to 0,
		\]
		where the $RH$-module~$P_i$ is finitely generated projective for all~$i\le n-1$.
		By applying the induction functor~$\Ind_H^G$, we obtain an exact sequence of $RG$-modules
		\[
			0\to \Ind_H^GK\to \Ind_H^G P_{n-1}\to \Ind_H^G P_{n-2}\to \cdots\to \Ind_H^G P_0\to R[G/H]\to 0, 
		\]
		where the $RG$-module~$\Ind_H^G P_i$ is finitely generated projective for all~$i\le n$.
		Since the $RG$-module~$R[G/H]$ is of type~$\sfFP_n$, the $RG$-module~$\Ind_H^G K$ is finitely generated.
		Since the ring extension $RH\to RG$ is faithfully flat, it follows that the $RH$-module~$K$ is finitely generated.
		 Hence the group~$H$ is of type~$\sfFP_n(R)$.
	\end{proof}
\end{lem}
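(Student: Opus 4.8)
The plan is to prove the two implications separately, exploiting that the induction functor $\Ind_H^G = RG\otimes_{RH}(-)$ is well-behaved. First I would record its key properties: since $RG$ is free as a right $RH$-module (on any set of representatives for the cosets $G/H$), the functor $\Ind_H^G$ is exact, carries finitely generated projective $RH$-modules to finitely generated projective $RG$-modules, and sends the trivial module to $R[G/H]\cong \Ind_H^G R$.

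For the forward implication, suppose $H$ is of type $\sfFP_n(R)$, so that $R$ admits a projective resolution over $RH$ whose terms in degrees $\le n$ are finitely generated. Applying $\Ind_H^G$ term by term and using exactness yields a projective resolution of $R[G/H]$ over $RG$ that is finitely generated in degrees $\le n$; hence $R[G/H]$ is of type $\sfFP_n$. This direction is routine.

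For the converse I would argue by induction on $n$, the case $n=0$ being immediate since $R[G/H]$ is cyclic and every group is of type $\sfFP_0(R)$. Assuming the statement for $n-1$, I may take $H$ to be of type $\sfFP_{n-1}(R)$ and choose a partial resolution $0\to K\to P_{n-1}\to\cdots\to P_0\to R\to 0$ of $RH$-modules with each $P_i$ finitely generated projective. Inducing up produces an exact sequence of $RG$-modules in which $\Ind_H^G P_i$ is finitely generated projective for $i\le n-1$; by the standard syzygy criterion for $\sfFP_n$ (a module of type $\sfFP_n$ has finitely generated $n$-th syzygy in any such partial resolution by finitely generated projectives), the hypothesis on $R[G/H]$ forces the $n$-th syzygy $\Ind_H^G K$ to be finitely generated over $RG$. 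It then remains to descend finite generation back along $RH\to RG$ and conclude that $K$ is finitely generated over $RH$, whence $H$ is of type $\sfFP_n(R)$.

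The main obstacle is exactly this last descent step. The point is that $RG$ is not merely flat but \emph{faithfully} flat over $RH$ (again because it is free on coset representatives). Writing $K$ as the filtered union of its finitely generated $RH$-submodules $K_\alpha$ and using flatness to identify $\Ind_H^G K_\alpha$ with a submodule of $\Ind_H^G K$, the finite generation of $\Ind_H^G K$ confines it to some $\Ind_H^G K_\alpha$; faithful flatness then upgrades the vanishing of $\Ind_H^G(K/K_\alpha)$ to $K=K_\alpha$. I expect verifying this faithfully flat descent of finite generation to be the only non-formal part of the argument; everything else follows mechanically from the properties of $\Ind_H^G$ and the syzygy characterisation of $\sfFP_n$.
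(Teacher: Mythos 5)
Your proposal is correct and follows essentially the same route as the paper's proof: the forward direction via exactness, projectivity, and finite-generation preservation of $\Ind_H^G$, and the converse by induction on $n$, inducing up a partial resolution, invoking the syzygy characterisation of $\sfFP_n$, and descending finite generation of the kernel along the faithfully flat extension $RH\to RG$. Your filtered-colimit argument for the descent step is a valid elaboration of what the paper asserts in one line.
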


Algebraic finiteness properties of modules satisfy 2-out-of-3 properties.
	
\begin{lem}[{\cite[Proposition~1.4]{Bieri81}}]
\label{lem:extensions}
	Let~$0\to K\to M\to N\to 0$ be a short exact sequence of $R$-modules and let~$n\in \IN$.
	The following hold:
	\begin{enumerate}[label=\enum]
		\item\label{item:quotient} If~$K$ is of type~$\sfFP_{n-1}$ and~$M$ is of type~$\sfFP_n$, then~$N$ is of type~$\sfFP_n$;
		\item\label{item:extension} If~$K$ and~$N$ are of type~$\sfFP_n$, then~$M$ is of type~$\sfFP_n$;
		\item\label{item:kernel} If~$M$ is of type~$\sfFP_{n-1}$ and~$N$ is of type~$\sfFP_n$, then~$K$ is of type~$\sfFP_{n-1}$.
	\end{enumerate}
\end{lem}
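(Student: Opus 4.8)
The plan is to prove all three statements simultaneously by induction on $n$, using two standard tools: a \emph{syzygy characterisation} of type~$\sfFP_n$ and the \emph{horseshoe lemma}. For the first tool, recall that for $n\ge 1$ an $R$-module~$M$ is of type~$\sfFP_n$ if and only if~$M$ is finitely generated and, for some (equivalently any) surjection $F\twoheadrightarrow M$ from a finitely generated free $R$-module~$F$, the kernel is of type~$\sfFP_{n-1}$. Independence of the choice of~$F$ follows from the generalised Schanuel's lemma together with the fact that type~$\sfFP_{n-1}$ is insensitive to adding finitely generated free summands. This reduces every $\sfFP_n$-statement about a module to an $\sfFP_{n-1}$-statement about a first syzygy, which is what drives the induction.

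For the second tool, given the short exact sequence $0\to K\to M\to N\to 0$, I would choose surjections $F_K\twoheadrightarrow K$ and $F_N\twoheadrightarrow N$ from finitely generated free modules and apply the horseshoe lemma to obtain a surjection $F_K\oplus F_N\twoheadrightarrow M$ compatible with the sequence. Taking kernels produces a short exact sequence of first syzygies
\[
	0\to K_1\to M_1\to N_1\to 0,
\]
to which the inductive hypothesis applies. The base case $n=0$ is immediate: a quotient of a finitely generated module is finitely generated, giving~\ref{item:quotient}; an extension of finitely generated modules is finitely generated, giving~\ref{item:extension}; and~\ref{item:kernel} is vacuous, as type~$\sfFP_{-1}$ imposes no condition.

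For the inductive step I would argue as follows. For~\ref{item:extension}, the hypotheses make~$K_1$ and~$N_1$ of type~$\sfFP_{n-1}$, so the inductive~\ref{item:extension} yields~$M_1$ of type~$\sfFP_{n-1}$, whence~$M$ is of type~$\sfFP_n$. For~\ref{item:quotient}, note that~$N$ is finitely generated as a quotient of~$M$; since~$M$ is~$\sfFP_n$ the syzygy~$M_1$ is~$\sfFP_{n-1}$, and since~$K$ is~$\sfFP_{n-1}$ the syzygy~$K_1$ is~$\sfFP_{n-2}$, so the inductive~\ref{item:quotient} gives~$N_1$ of type~$\sfFP_{n-1}$ and hence~$N$ of type~$\sfFP_n$. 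For~\ref{item:kernel} I would first check that~$K$ is finitely generated: here~$N$ is finitely presented and~$M$ is finitely generated (both guaranteed for $n\ge 1$), and a diagram chase shows that a finitely generated free cover of~$M$ restricts to a finitely generated cover of~$K$. Then, for $n\ge 2$, the syzygies satisfy the hypotheses of the inductive~\ref{item:kernel}, yielding~$K_1$ of type~$\sfFP_{n-2}$ and hence~$K$ of type~$\sfFP_{n-1}$.

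The main obstacle I anticipate is bookkeeping rather than conceptual: ensuring that the syzygy characterisation is genuinely independent of the chosen free cover (so that the three first syzygies assemble into one short exact sequence), and tracking the index shifts so that each inductive step invokes only the \emph{same} statement one level lower, together with elementary finite-generation facts. The delicate endpoint is~\ref{item:kernel}, where finite generation of~$K$ must be established separately before the dimension shift can begin; this is exactly the $n=1$ instance ``$M$ finitely generated and~$N$ finitely presented imply~$K$ finitely generated'', which anchors the induction for that part.
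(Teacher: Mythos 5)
The paper gives no proof of this lemma at all --- it is quoted verbatim from Bieri's notes (\cite[Proposition~1.4]{Bieri81}) --- so there is nothing internal to compare against; your argument is the standard dimension-shifting proof (syzygy characterisation of~$\sfFP_n$ via generalised Schanuel, plus the horseshoe lemma), which is essentially the argument in the cited source. Your proof is correct, including the two points that are easiest to fumble: the independence of the syzygy condition from the chosen finitely generated free cover, and the separate verification in part~\ref{item:kernel} that $K$ is finitely generated when $M$ is finitely generated and $N$ finitely presented, which correctly anchors that branch of the induction.
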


We use Lemma~\ref{lem:extensions} by cutting exact sequences into short exact sequences.

\begin{lem}
\label{lem:chain}
	Let~$C_*$ be an $R$-chain complex that is concentrated in degrees~$\ge -1$ and let~$n\in \IN$.
	Suppose that the following hold:
	\begin{enumerate}[label=\enum]
		\item For all~$i\in \{-1,\ldots,n\}$, $H_i(C_*)=0$;
		\item For all~$i\in \{1,\ldots,n+1\}$, $C_i$ is of type~$\sfFP_{n+1-i}$;
		\item $C_{-1}$ is of type~$\sfFP_n$.
	\end{enumerate}
		Then~$C_0$ is of type~$\sfFP_n$.
	\begin{proof}
		For all~$i\in \IN$, we denote~$K_i\coloneqq \ker(\partial_i\colon C_i\to C_{i-1})$.
		The following horizontal sequence of $R$-modules is exact by assumption~(i):
		\[\begin{tikzcd}
			C_{n+1}\ar{r}{\partial_{n+1}}\ar[two heads]{dr} 
			& C_n\ar{r}{\partial_n}\ar[two heads]{dr} 
			& C_{n-1}\ar{r}{\partial_{n-1}} 
			& \cdots\ar{r}{\partial_1}\ar[two heads]{dr} 
			& C_0\ar{r}{\partial_0} 
			& C_{-1}\ar{r} 
			& 0. 
			\\
			& K_n\ar[hook]{u} 
			& K_{n-1}\ar[hook]{u} 
			& \cdots 
			& K_0\ar[hook]{u}
		\end{tikzcd}\]
		Since~$C_{n+1}$ is of type~$\sfFP_0$ (i.e., finitely generated) by assumption~(ii), also~$K_n$ is of type~$\sfFP_0$.
		Since~$C_n$ is of type~$\sfFP_1$ by assumption~(ii) and~$K_n$ is of type~$\sfFP_0$, it follows that~$K_{n-1}$ is of type~$\sfFP_1$ by Lemma~\ref{lem:extensions}~\ref{item:quotient}.
		Inductively, we obtain that~$K_0$ is of type~$\sfFP_n$.
		Since~$C_{-1}$ is of type~$\sfFP_{n}$ by assumption~(iii) and~$K_0$ is of type~$\sfFP_n$, it follows that~$C_0$ is of type~$\sfFP_n$ by Lemma~\ref{lem:extensions}~\ref{item:extension}.
	\end{proof}
\end{lem}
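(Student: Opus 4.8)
The plan is to cut the complex $C_*$ into short exact sequences and transport the $\sfFP$ properties along them by means of the 2-out-of-3 properties recorded in Lemma~\ref{lem:extensions}. Writing $K_i \coloneqq \ker(\partial_i \colon C_i \to C_{i-1})$ for the module of $i$-cycles, assumption~(i) makes $C_*$ exact in the relevant range. For $0 \le i \le n$ the vanishing of $H_i$ identifies $\operatorname{im}(\partial_{i+1})$ with $K_i$, so that $\partial_{i+1}$ induces short exact sequences
\[
0 \to K_{i+1} \to C_{i+1} \to K_i \to 0 \qquad (0 \le i \le n),
\]
while the vanishing of $H_{-1}$, together with $C_*$ being concentrated in degrees $\ge -1$, makes $\partial_0$ surjective and yields
\[
0 \to K_0 \to C_0 \to C_{-1} \to 0.
\]

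First I would show, by induction on $i$ descending from $n$ to $0$, that $K_i$ is of type~$\sfFP_{n-i}$. In the base case $K_n$ is a quotient of $C_{n+1}$, which is of type~$\sfFP_0$ by assumption~(ii), so $K_n$ is finitely generated, i.e.\ of type~$\sfFP_0$. For the inductive step I would apply Lemma~\ref{lem:extensions}~\ref{item:quotient} to the sequence $0 \to K_{i+1} \to C_{i+1} \to K_i \to 0$: the submodule $K_{i+1}$ is of type~$\sfFP_{n-i-1}$ by the inductive hypothesis, and the middle term $C_{i+1}$ is of type~$\sfFP_{n-i}$ by assumption~(ii), whence the quotient $K_i$ is of type~$\sfFP_{n-i}$. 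At the foot of the induction this gives that $K_0$ is of type~$\sfFP_n$. To conclude, I would feed the remaining sequence $0 \to K_0 \to C_0 \to C_{-1} \to 0$ into Lemma~\ref{lem:extensions}~\ref{item:extension}: both $K_0$ and $C_{-1}$ are of type~$\sfFP_n$ (the latter by assumption~(iii)), so $C_0$ is of type~$\sfFP_n$, as required.

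I do not expect any conceptual obstacle; the only real point is the indexing bookkeeping that makes the induction close up. The hypotheses in assumption~(ii) are calibrated so that in each short exact sequence the middle term $C_{i+1}$ carries exactly one more degree of finiteness, namely $\sfFP_{n-i}$, than the cycle submodule $K_{i+1}$ entering from above, namely $\sfFP_{n-i-1}$; this is precisely the one-step gap demanded by the quotient rule of Lemma~\ref{lem:extensions}~\ref{item:quotient}. The complementary thing to check is that the acyclicity range $-1 \le i \le n$ in assumption~(i) is wide enough to supply all the short exact sequences used, which it is.
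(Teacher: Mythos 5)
Your proposal is correct and follows essentially the same route as the paper: the same short exact sequences $0\to K_{i+1}\to C_{i+1}\to K_i\to 0$, the same descending induction via Lemma~\ref{lem:extensions}~\ref{item:quotient} showing $K_i$ is of type~$\sfFP_{n-i}$, and the same final application of Lemma~\ref{lem:extensions}~\ref{item:extension} to $0\to K_0\to C_0\to C_{-1}\to 0$. The indexing bookkeeping you flag as the only delicate point indeed closes up exactly as you describe.
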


Applying Lemma~\ref{lem:chain} to the cellular chain complex of a $G$-CW-complex yields a criterion for the algebraic finiteness properties of vertex stabilisers.
We denote the stabiliser of a cell~$\sigma$ by $G_\sigma\coloneqq \{g\in G\mid g\sigma=\sigma\}$. 

\begin{prop}
\label{prop:FPn}
	Let~$G$ be a group and let~$X$ be a $G$-CW-complex.
	Let~$R$ be a ring and let~$n\in \IN$. 
	Suppose that the following hold:
	\begin{enumerate}[label=\enum]
		\item $X$ is $n$-acyclic over~$R$ (i.e., $H_i(X;R)\cong H_i(\operatorname{pt};R)$ for all~$i\le n$);
		\item The quotient~$G\backslash X^{(n+1)}$ of the $(n+1)$-skeleton is compact;
		\item For all~$i\in \{1,\ldots,n\}$, all stabilisers of $i$-cells of~$X$ are of type~$\sfFP_{n+1-i}(R)$;
		\item $G$ is of type~$\sfFP_n(R)$.
	\end{enumerate}
	Then all stabilisers of 0-cells of~$X$ are of type~$\sfFP_n(R)$.
\begin{proof}
	Let~$C_*$ be the augmented cellular $RG$-chain complex of~$X$
	(with~$C_{-1}=R$ and the usual augmentation map $\partial_0\colon C_0\to C_{-1}$).
	For all~$i\in \IN$, the $RG$-module~$C_i$ is of the form~$\bigoplus_{\sigma\in \Sigma_i}R[G/G_\sigma]$, where~$\Sigma_i$ is a set of $G$-orbit representatives of the $i$-cells of~$X$.
	For all~$i\le n+1$, the set~$\Sigma_i$ is finite by assumption~(ii).
	We have:
	\begin{itemize}
		\item For all~$i\in \{-1,\ldots,n\}$, $H_i(C)=0$ by assumption~(i);
		\item For all~$i\in \{1,\ldots,n+1\}$, $C_i$ is of type~$\sfFP_{n+1-i}$ by assumptions~(ii) and~(iii);
		\item $C_{-1}$ is of type~$\sfFP_n$ by assumption~(iv).
	\end{itemize}
	Then Lemma~\ref{lem:chain} applies to the $RG$-chain complex~$C_*$ and yields that the $RG$-module~$C_0\cong \bigoplus_{v\in \Sigma_0}R[G/G_v]$ is of type~$\sfFP_n$.
	
	By induction on~$n$, we may assume that for every $0$-cell~$v$ of~$X$, the stabiliser~$G_v$ is of type~$\sfFP_{n-1}(R)$. 
	In particular, the $RG$-modules~$(R[G/G_v])_v$ are of type~$\sfFP_{n-1}$.
	Since~$C_0$ is of type~$\sfFP_n$, it follows that the $RG$-modules~$(R[G/G_v])_v$ are of type~$\sfFP_n$ by Lemma~\ref{lem:extensions}~\ref{item:quotient}.
	Hence the groups~$(G_v)_v$ are of type~$\sfFP_n(R)$ by Lemma~\ref{lem:permutation}.
	\end{proof}
\end{prop}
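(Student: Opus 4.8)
The plan is to transport the geometric hypotheses onto the augmented cellular chain complex of $X$, viewed as a complex of $RG$-modules, and then read off the finiteness of the vertex stabilisers through the permutation-module dictionary of Lemma~\ref{lem:permutation}. Concretely, I would form the augmented cellular chain complex $C_*$ with $C_{-1}=R$ (the augmentation) and $C_i\cong\bigoplus_{\sigma\in\Sigma_i}R[G/G_\sigma]$ for $i\ge 0$, where $\Sigma_i$ is a set of representatives for the $G$-orbits of $i$-cells. Each clause of the proposition is tailored to supply exactly one input of Lemma~\ref{lem:chain}, so the whole proof should reduce to checking that the three hypotheses of that lemma hold for $C_*$.

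I would verify these in turn. For acyclicity, $n$-acyclicity of $X$ over $R$ gives $H_i(C_*)=0$ for all $-1\le i\le n$. For the positive-degree modules, cocompactness of $G\backslash X^{(n+1)}$ makes each $\Sigma_i$ finite for $i\le n+1$, so every $C_i$ is a \emph{finite} direct sum of permutation modules; in top degree $i=n+1$ these cyclic modules are automatically finitely generated (type $\sfFP_0$), while for $1\le i\le n$ hypothesis~(iii) combined with Lemma~\ref{lem:permutation} shows each $R[G/G_\sigma]$ is of type $\sfFP_{n+1-i}$, a property preserved by finite direct sums. Finally, $G$ being of type $\sfFP_n(R)$ is precisely the statement that $C_{-1}=R$ is of type $\sfFP_n$ over $RG$. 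With these checks in place, Lemma~\ref{lem:chain} yields that $C_0\cong\bigoplus_{v\in\Sigma_0}R[G/G_v]$ is of type $\sfFP_n$.

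The remaining step, and to my mind the only genuinely delicate one, is to pass from finiteness of the \emph{aggregate} module $C_0$ to finiteness of each \emph{individual} summand $R[G/G_v]$, after which Lemma~\ref{lem:permutation} translates this back into $G_v$ being of type $\sfFP_n(R)$. The hard part is that type $\sfFP_n$ does not descend from a direct sum to its summands without extra leverage. I would supply that leverage by an outer induction on $n$: the hypotheses for $n-1$ all follow by monotonicity (lower acyclicity, a smaller cocompact skeleton, weaker stabiliser bounds, and $G$ of type $\sfFP_{n-1}(R)$), so the inductive hypothesis makes each $G_v$ of type $\sfFP_{n-1}(R)$, whence each $R[G/G_v]$ and the complementary finite sum $\bigoplus_{w\ne v}R[G/G_w]$ are of type $\sfFP_{n-1}$. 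Applying the $2$-out-of-$3$ property Lemma~\ref{lem:extensions}~\ref{item:quotient} to the split short exact sequence with middle term $C_0$ of type $\sfFP_n$ and sub-term of type $\sfFP_{n-1}$ then forces the quotient $R[G/G_v]$ to be of type $\sfFP_n$, completing the induction.
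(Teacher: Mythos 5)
Your proposal is correct and follows essentially the same route as the paper: the augmented cellular $RG$-chain complex fed into Lemma~\ref{lem:chain}, followed by an outer induction on $n$ and Lemma~\ref{lem:extensions}~\ref{item:quotient} to extract each summand $R[G/G_v]$ from $C_0$, and finally Lemma~\ref{lem:permutation}. Your explicit mention of the split short exact sequence isolating one summand is exactly the mechanism the paper's appeal to Lemma~\ref{lem:extensions}~\ref{item:quotient} relies on.
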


When~$X$ is 1-dimensional,
Proposition~\ref{prop:FPn} is classical~\cite[Proposition~2.13]{Bieri81}.
As an application to higher dimensions, we recover a result of Dahmani--Guirardel--Osin~\cite{DGO17} for relatively hyperbolic groups.

\begin{ex}
	Let~$G$ be a finitely generated group and let~$\calH$ be a finite collection of subgroups of~$G$.
	Suppose that the pair~$(G,\calH)$ is relatively hyperbolic.
	If~$G$ is of type~$\sfFP_n(R)$, then every~$H\in \calH$ is of type~$\sfFP_n(R)$ ~\cite[Corollary~4.32]{DGO17}.
	This result also follows by applying Proposition~\ref{prop:FPn} to the Rips complex (for sufficiently large diameter) on the coned-off Cayley graph~\cite{MPP19}.
	It is a contractible cocompact $G$-CW-complex such that all stabilisers of positive dimensional cells are finite and every~$H\in \calH$ appears as a vertex stabiliser. 
\end{ex}

\begin{rem}
	The proof of Proposition~\ref{prop:FPn} relies mainly on 2-out-of-3 properties (Lemma~\ref{lem:extensions}) and lends itself to generalisation.
	The algebraic analogue of Theorem~\ref{thm:combination} for~$\sfFP_n$~\cite[Proposition~1.1]{Brown87} can be proved in a similar manner using only Lemma~\ref{lem:extensions}~\ref{item:quotient}.
	Note that to prove Proposition~\ref{prop:FPn} we used only Lemma~\ref{lem:extensions}~\ref{item:quotient} and~\ref{item:extension}.
	Using Lemma~\ref{lem:extensions}~\ref{item:quotient},~\ref{item:extension}, and~\ref{item:kernel} one can prove a similar criterion for stabilisers of $d$-cells, for~$d\ge 2$, under appropriate assumptions on stabilisers of cells of all other dimensions.
	
	Moreover, apart from algebraic finiteness properties, our method of proof applies to other homological group properties, such as vanishing of cohomology with coefficients in the group ring, being a duality group, and vanishing of $\ell^2$-Betti numbers.
\end{rem}

\section{Finite presentability of vertex stabilisers}

We do not know if the topological analogue of Proposition~\ref{prop:FPn} for $n=2$ holds.

\begin{question}
\label{qn:F2}
	Let~$G$ be a group and let~$X$ be a $G$-CW-complex.
	Suppose that the following hold:
	\begin{enumerate}[label=\enum]
		\item $X$ is $2$-connected;
		\item The quotient~$G\backslash X^{(3)}$ of the $3$-skeleton is compact;
		\item All stabilisers of 2-cells of~$X$ are finitely generated;
		\item All stabilisers of 1-cells of~$X$ are finitely presented;
		\item $G$ is finitely presented.
	\end{enumerate}
	Then are all stabilisers of 0-cells of~$X$ finitely presented?
\end{question}

When~$X$ is 1-dimensional, the answer is positive~\cite{Haglund-Wise21} (see also~\cite[Theorem~3.1]{Dydak82}).
We will answer Question~\ref{qn:F2} affirmatively under additional assumptions on links of vertices (Proposition~\ref{prop:F2}), generalising the 1-dimensional case.

We use a well-known ``blow-up" construction that produces from a $G$-CW-complex a \emph{free} $G$-CW-complex and a (non-equivariant) homotopy equivalence with prescribed fibres.

\begin{thm}[\cite{Haefliger92}]
\label{thm:blow-up}
	Let~$G$ be a group and let~$X$ be a $G$-CW-complex.
	Let~$\Sigma$ be a set of $G$-orbit representatives of the open cells of~$X$.
	For every cell~$\sigma\in \Sigma$, we fix a model for~$EG_\sigma$.
	Then there exists a free $G$-CW-complex~$\widehat{X}$ and a $G$-map~$p\colon \widehat{X}\to X$ such that the following hold:
	\begin{enumerate}[label=\enum]
		\item For every open cell~$\sigma\in \Sigma$, there is a $G$-homeomorphism $p^{-1}(\sigma)\cong EG_\sigma\times \sigma$;
		\item For every $G$-subcomplex~$A$ of~$X$, the restriction $p|_{p^{-1}(A)}\colon p^{-1}(A)\to A$ is a (non-equivariant) homotopy equivalence.
		In particular, $p\colon \widehat{X}\to X$ is a (non-equivariant) homotopy equivalence.
	\end{enumerate}
\end{thm}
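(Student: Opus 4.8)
The plan is to build~$\widehat{X}$ by induction over the equivariant skeleta of~$X$, replacing each orbit of $k$-cells~$G/G_\sigma\times D^k$ by its ``thickened'' free analogue~$G\times_{G_\sigma}(EG_\sigma\times D^k)$ and assembling~$p$ from the projections~$EG_\sigma\times D^k\to D^k$. Concretely, I would construct a compatible sequence of free $G$-CW-complexes~$\widehat{X}^{(k)}$ with $G$-maps~$p_k\colon \widehat{X}^{(k)}\to X^{(k)}$, starting from~$\widehat{X}^{(-1)}=\emptyset$. Assume~$p_{k-1}$ has been built so that property~(i) holds over all cells of dimension~$<k$ and so that its restriction to the preimage of any $G$-subcomplex is a non-equivariant homotopy equivalence. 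For each orbit representative~$\sigma\in \Sigma$ of dimension~$k$, with attaching map~$q_\sigma\colon G/G_\sigma\times S^{k-1}\to X^{(k-1)}$, I would attach a copy of~$G\times_{G_\sigma}(EG_\sigma\times D^k)$ to~$\widehat{X}^{(k-1)}$ along its boundary~$G\times_{G_\sigma}(EG_\sigma\times S^{k-1})$, and extend~$p_{k-1}$ over the new cells by the $G$-map induced from~$EG_\sigma\times D^k\to D^k$ followed by the characteristic map of~$\sigma$. I would then set~$\widehat{X}=\bigcup_k\widehat{X}^{(k)}$ and~$p=\bigcup_k p_k$; since each~$EG_\sigma$ is $G_\sigma$-free and induction preserves freeness, $\widehat{X}$ is a free $G$-CW-complex. (The quick choice~$\widehat{X}=EG\times X$ with the diagonal action already yields a free $G$-CW-complex with all the required properties, but only for the restriction of~$EG$ to~$G_\sigma$ as a model of~$EG_\sigma$; realising a \emph{prescribed} model, as one needs for finiteness properties, is exactly what forces the inductive gluing.)

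The key step, and the main obstacle, is producing the equivariant gluing map~$\widehat{q}_\sigma\colon G\times_{G_\sigma}(EG_\sigma\times S^{k-1})\to \widehat{X}^{(k-1)}$ compatible with~$q_\sigma$. I would obtain it as an equivariant lift of the composite~$G\times_{G_\sigma}(EG_\sigma\times S^{k-1})\to G/G_\sigma\times S^{k-1}\xrightarrow{q_\sigma}X^{(k-1)}$ along~$p_{k-1}$. The source is a free $G$-CW-complex, so a lift can be built one free $G$-orbit of cells at a time: over a free cell~$G\times D^j$ the equivariant extension problem reduces, by equivariance, to a non-equivariant lifting-extension problem along~$p_{k-1}$ relative to~$G\times S^{j-1}$, whose obstructions lie in the homotopy groups of the homotopy fibre of~$p_{k-1}$. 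Since~$p_{k-1}$ is a non-equivariant homotopy equivalence by the inductive hypothesis, this fibre is weakly contractible, the obstructions vanish, and the lift exists (and is unique up to $G$-homotopy). A routine application of the homotopy extension property then lets me rectify~$\widehat{q}_\sigma$ so that~$p_{k-1}\circ\widehat{q}_\sigma$ equals the projected attaching map on the nose, which is what makes the extended~$p_k$ well defined.

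Granting this, property~(i) is immediate: the preimage under~$p_k$ of an open cell~$\sigma$ is~$\{e\}\times EG_\sigma\times\mathring{D}^k$ inside~$G\times_{G_\sigma}(EG_\sigma\times D^k)$, which is $G_\sigma$-homeomorphic to~$EG_\sigma\times\sigma$. For property~(ii) I would argue by a second induction over the skeleta of the given $G$-subcomplex~$A$: the space~$p^{-1}(A^{(k)})$ is obtained from~$p^{-1}(A^{(k-1)})$ by the pushout attaching the blown-up $k$-cells, and~$p$ maps this pushout to the pushout defining~$A^{(k)}$. Over each cell the comparison map is the projection~$EG_\sigma\times D^k\to D^k$, a homotopy equivalence because~$EG_\sigma$ is contractible; since the relevant inclusions are cofibrations, the gluing lemma for homotopy equivalences shows inductively that~$p|_{p^{-1}(A^{(k)})}$ is a homotopy equivalence, and passing to the colimit over~$k$ gives the claim for~$A$ (taking~$A=X$ yields the final assertion). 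The one technical point demanding care throughout is maintaining strict commutativity of~$p$ with the attaching and characteristic maps, so that the pushouts line up on the nose; this is exactly what the homotopy-extension rectification above secures.
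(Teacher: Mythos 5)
The paper offers no proof of Theorem~\ref{thm:blow-up}: it is quoted directly from~\cite{Haefliger92}, so there is no internal argument to compare against. Your skeleton-by-skeleton construction --- replacing each orbit $G/G_\sigma\times D^k$ by $G\times_{G_\sigma}(EG_\sigma\times D^k)$, producing the equivariant attaching maps as lifts through $p_{k-1}$ via obstruction theory on a free $G$-CW source (where only the trivial isotropy group occurs, so the non-equivariant weak equivalence of $p_{k-1}$ is exactly what is needed), and verifying~(ii) with the gluing lemma for homotopy equivalences --- is the standard proof of this result, and the identification $p^{-1}(\sigma)\cong EG_\sigma\times\sigma$ comes out exactly as you say. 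Your parenthetical remark correctly identifies why $EG\times X$ does not suffice.

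The one step that does not work as written is the rectification. You claim that the homotopy extension property lets you replace $\widehat q_\sigma$ by a $G$-homotopic map whose composite with $p_{k-1}$ \emph{equals} the projected attaching map on the nose. But $p_{k-1}$ is only a homotopy equivalence, not a fibration: already for a single edge whose vertex and edge stabilisers differ, the preimage of a closed edge is a double mapping cylinder, whose projection to the edge admits no strict path lifting. Consequently a map that lifts up to homotopy need not become a strict lift by moving the source within its homotopy class, and HEP applied in the source does not produce such a representative. The standard repair is to absorb the homotopy $H\colon p_{k-1}\circ\widehat q_\sigma\simeq \bar q_\sigma$ into the definition of $p_k$ itself, on a collar $EG_\sigma\times S^{k-1}\times[1/2,1]$ of each newly attached cell, while sending the inner disc $EG_\sigma\times D^k_{1/2}$ by the projection followed by the characteristic map of~$\sigma$ (equivalently, one glues along mapping cylinders). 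This still gives $p_k^{-1}(\sigma)\cong EG_\sigma\times\sigma$ for the open cell, since the collar is carried into the $(k-1)$-skeleton, and the pushout comparison needed for~(ii) goes through after presenting $X^{(k)}$ with the corresponding collars. With that modification your argument is complete.
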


Note that Theorem~\ref{thm:combination} is a direct consequence of Theorem~\ref{thm:blow-up}.

For a finitely presented group~$G$, we extract from a simply-connected free $G$-CW-complex a simply-connected \emph{cocompact} $G$-subcomplex.

\begin{lem}
\label{lem:deleting cells}
	Let~$G$ be a finitely presented group and let~$X$ be a simply-connected free $G$-CW-complex with cocompact 1-skeleton.
	Let~$F$ be a finite set of $G$-orbits of 2-cells of~$X$.
	Then there exists a simply-connected 2-dimensional cocompact $G$-subcomplex~$Z$ of~$X$ containing~$F$ and the 1-skeleton of~$X$.
	\begin{proof}
		The quotient CW-complex~$G\backslash X$ has compact 1-skeleton and fundamental group~$G$.
		The CW-structure on~$G\backslash X$ yields a group presentation $G\cong \spann{S\mid R}$ with finite generating set~$S$.
		The finite set~$F$ corresponds to a finite subset~$R_F$ of~$R$.
		Since~$G$ is finitely presented, there exists a finite subset~$R'$ of~$R$ containing~$R_F$ such that $G\cong \spann{S\mid R'}$.
		Let~$Y$ be the 2-dimensional compact subcomplex of~$G\backslash X$ consisting of the 1-skeleton and those 2-cells corresponding to~$R'$.
		Denoting by $q\colon X\to G\backslash X$ the quotient map,
		the preimage $Z\coloneqq q^{-1}(Y)$ is as desired.
	\end{proof}
\end{lem}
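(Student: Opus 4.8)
The plan is to descend to the quotient, use finite presentability to retain only finitely many relations, and then pull the resulting finite subcomplex back up. Since $X$ is simply-connected and the $G$-action is free, the quotient map $q\colon X\to G\backslash X$ is the universal covering and identifies $\pi_1(G\backslash X)\cong G$. Cocompactness of the $1$-skeleton means that $(G\backslash X)^{(1)}$ is a finite graph, so choosing a maximal tree expresses its fundamental group as a free group on a finite set~$S$; reading off the attaching maps of the $2$-cells then gives a presentation $G\cong\spann{S\mid R}$ with $S$ finite but $R$ possibly infinite. The finite set~$F$ of $G$-orbits of $2$-cells corresponds to a finite subset $R_F\subseteq R$.

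First I would shrink~$R$ to a finite set of relations. Writing $\pi\colon F(S)\to G$ for the quotient of the free group on~$S$, the kernel $\ker\pi$ is the normal closure of~$R$. Finite presentability of~$G$ over the finite generating set~$S$ means that $\ker\pi$ is also the normal closure of some finite set; since each such relator is a product of finitely many conjugates of elements of $R^{\pm 1}$, only finitely many elements of~$R$ are needed to normally generate $\ker\pi$. Adjoining $R_F$ to this finite collection yields a finite subset $R'\subseteq R$ with $R_F\subseteq R'$ and $G\cong\spann{S\mid R'}$. Let $Y\subseteq G\backslash X$ be the compact $2$-dimensional subcomplex consisting of the full $1$-skeleton together with the $2$-cells indexed by~$R'$, and set $Z\coloneqq q^{-1}(Y)$. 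By construction $Z$ is a $2$-dimensional $G$-subcomplex containing the $1$-skeleton of~$X$ and all of~$F$, and it is cocompact because~$Y$ is compact.

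The main obstacle is to verify that~$Z$ is simply-connected, which I would settle using covering-space theory. Because~$Z$ contains the (connected) $1$-skeleton of~$X$, it is connected, and the free cocompact $G$-action makes $q|\colon Z\to Y$ a connected regular $G$-covering. Such a covering is classified by an epimorphism $\pi_1(Y)\twoheadrightarrow G$, and since lifts of loops into $Z\subseteq X$ agree with lifts into~$X$, this epimorphism is the composite $\pi_1(Y)\xrightarrow{\iota_*}\pi_1(G\backslash X)\cong G$ induced by the inclusion $\iota\colon Y\hookrightarrow G\backslash X$. As~$Y$ and $G\backslash X$ have the same $1$-skeleton, $\iota_*$ is identified with the canonical map $\spann{S\mid R'}\to\spann{S\mid R}$, which is an isomorphism by the choice of~$R'$. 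Hence the classifying epimorphism is an isomorphism, its kernel is trivial, and $Z\to Y$ is the universal covering; in particular $\pi_1(Z)=0$, completing the argument.
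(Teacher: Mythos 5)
Your proposal is correct and follows essentially the same route as the paper: pass to the quotient $G\backslash X$, read off a presentation $\spann{S\mid R}$, use finite presentability to extract a finite subset $R'\supseteq R_F$ of the relators, and pull back the resulting compact subcomplex~$Y$. The only difference is that you spell out the covering-space argument for why $Z=q^{-1}(Y)$ is simply-connected, a verification the paper leaves implicit.
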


For a simplicial $G$-CW-complex~$X$,
the stabiliser~$G_v$ of a vertex~$v$ acts on the link of~$v$ in~$X$.
If the link is simply-connected, then the finite presentability of~$G_v$ can be deduced by applying Theorem~\ref{thm:combination} for~$n=2$ to the link.
Below we assume only that every connected component of the link is simply-connected.
This is the case, e.g., when~$X$ is 1-dimensional.

\begin{prop}
\label{prop:F2}
	Let~$G$ be a group and let~$X$ be a simplicial $G$-CW-complex.
	Suppose that the following hold:
	\begin{enumerate}[label=\enum]
		\item $X$ is simply-connected;
		\item The quotient~$G\backslash X^{(3)}$ of the 3-skeleton is compact;
		\item All stabilisers of 2-simplices of~$X$ are finitely generated;
		\item All stabilisers of 1-simplices of~$X$ are finitely presented;
		\item For every 0-simplex~$v$ of~$X$, every connected component of the link of~$v$ in~$X$ is simply-connected;
		\item $G$ is finitely presented.
	\end{enumerate}
	Then all stabilisers of 0-simplices of~$X$ are finitely presented.
	\begin{proof}
		By Proposition~\ref{prop:FPn} for~$n=1$, all vertex stabiliers are of type~$\sfFP_1(\IZ)$ (i.e., finitely generated).
		Let~$v$ be a vertex of~$X$.
		We will show that the stabiliser~$G_v$ is finitely presented by exhibiting a simply-connected 2-dimensional $G_v$-cocompact free $G_v$-CW-complex.
		
		By replacing~$X$ with its 3-skeleton~$X^{(3)}$, we may assume that~$X$ is 3-dimensional.
		We subdivide~$X$, vaguely speaking, by inserting a small sphere centred at~$v$.
		More precisely, first, we subdivide each edge~$e$ containing~$v$ by inserting a new vertex~$w_e$ close to~$v$. 
		Second, we subdivide each 2-simplex~$f$ containing~$v$ as follows:
		For two edges~$e_1,e_2$ of~$f$ meeting at~$v$, we insert a new edge~$e_f$ connecting the new vertices~$w_{e_1},w_{e_2}$.
		Third, we subdivide each 3-simplex~$\sigma$ containing~$v$ as follows: 
		For three faces~$f_1,f_2,f_3$ of~$\sigma$ meeting at~$v$, we insert a new 2-simplex~$f_\sigma$ spanned by the new edges~$e_{f_1},e_{f_2},e_{f_3}$. 
		By performing this subdivision $G$-equivariantly, i.e., at all $G$-translates of~$v$, we obtain a (not necessarily simplicial) $G$-CW-complex~$X'$, where the new vertex~$w_e$ has stabiliser~$G_e$, the new edge~$e_f$ has stabiliser~$G_f$, and the new 2-cell~$f_\sigma$ has stabiliser~$G_\sigma$.
		
		Let~$\lk_{X'}(v)$ denote the (non-equivariant) subcomplex of~$X'$ consisting of the new cells~$w_e,e_f,f_\sigma$ corresponding to simplices of~$X$ containing~$v$.
		Let~$\st_{X'}(v)$ denote the (non-equivariant) subcomplex of~$X'$ consisting of~$\lk_{X'}(v)$ and all cells of~$X'$ containing~$v$.
		Then~$\lk_{X'}(v)$ and~$\st_{X'}(v)$ are (non-equivariantly) homotopy equivalent to the link and the star of~$v$ in~$X$, respectively.
		Note that~$G_v$ acts cocompactly on~$\lk_{X'}(v)$ and on~$\st_{X'}(v)$ by assumption~(ii).
		Since we performed the subdivision in a small neighbourhood of~$v$ and its $G$-translates, $X'$ contains the induced $G$-CW-complexes~$G\times_{G_v} \lk_{X'}(v)$ and~$G\times_{G_v}\st_{X'}(v)$ as $G$-subcomplexes.
				
		For every ($G$-orbit representative) vertex~$w$, edge~$e$, 2-simplex~$f$, and 3-simplex~$\sigma$ of the simplicial $G$-CW-complex~$X$, we fix a model for~$EG_w$, $EG_e$, $EG_f$, and~$EG_\sigma$ with cocompact 1-skeleton, 2-skeleton, 1-skeleton, and 0-skeleton, respectively, using assumptions~(iii) and~(iv).
		Theorem~\ref{thm:blow-up} yields a  blow-up $p\colon \widehat{X'}\to X'$ with respect to the chosen classifying spaces.
		Since~$p$ is a (non-equivariant) homotopy equivalence
		and~$X'$ is simply-connected by assumption~(i), also~$\widehat{X'}$ is simply-connected.
		By construction, the $G$-CW-complex~$\widehat{X'}$ is free and has $G$-cocompact 1-skeleton, and the $G$-subcomplex~$p^{-1}(G\times_{G_v} \lk_{X'}(v))$ has $G$-cocompact 2-skeleton.
		Since~$G$ is finitely presented by assumption~(vi), Lemma~\ref{lem:deleting cells} yields a simply-connected 2-dimensional $G$-cocompact $G$-subcomplex~$Z$ of~$\widehat{X'}$ containing the 2-skeleton of~$p^{-1}(G\times_{G_v} \lk_{X'}(v))$ and the 1-skeleton of~$\widehat{X'}$.		
		We denote $L_Z\coloneqq Z\cap p^{-1}(\lk_{X'}(v))= p^{-1}(\lk_{X'}(v))^{(2)}$ and $S_Z\coloneqq Z\cap p^{-1}(\st_{X'}(v))$.
		
		The 2-dimensional free $G_v$-CW-complex~$S_Z$ is connected and $G_v$-cocompact.
		In order to conclude that~$G_v$ is finitely presented, it remains to show that~$S_Z$ is simply-connected.
		Since~$Z$ is simply-connected,
		the complement of~$S_Z$ in~$Z$ has one connected component~$K_i$ for each connected component~$L_i$ of~$L_Z$. 
		By properties of the map~$p$ (Theorem~\ref{thm:blow-up}), the preimage~$p^{-1}(G\times_{G_v}\lk_{X'}(v))$ is (non-equivariantly) homotopy equivalent to~$G\times_{G_v}\lk_{X'}(v)$, whose connected components are simply-connected by assumption~(v).
		It follows that~$L_i$, as the 2-skeleton of a connected component of~$p^{-1}(G\times_{G_v} \lk_{X'}(v))$, is simply-connected.
		Fix a basepoint of~$Z$ lying in~$S_Z$.
		Consider the cover of~$Z$ by open thickenings of $S_Z$ and the~$(K_i)_i$ with whiskers to the basepoint. 
		Then the Seifert--van Kampen Theorem for open covers yields that~$\pi_1(S_Z)$ is a free factor of~$\pi_1(Z)$.
		Since~$\pi_1(Z)$ is trivial by construction of~$Z$, we conclude that~$\pi_1(S_Z)$ is trivial.
	\end{proof}
\end{prop}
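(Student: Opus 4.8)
The plan is to prove that each vertex stabiliser $G_v$ is finitely presented by producing a simply-connected, $2$-dimensional, $G_v$-cocompact free $G_v$-CW-complex, which is equivalent to finite presentability. Proposition~\ref{prop:FPn} with $n=1$ first gives that every $G_v$ is of type $\sfFP_1(\IZ)$, hence finitely generated, and this feeds into the finiteness bookkeeping below. If the \emph{whole} link of $v$ were simply-connected one could simply blow it up and invoke Theorem~\ref{thm:combination} for $n=2$; under the weaker hypothesis~(v) I must instead build the complex by hand. The natural candidate is the (contractible) star of $v$, made free via the blow-up of Theorem~\ref{thm:blow-up}; the obstruction is that blowing up the vertex $v$ itself would demand a model for $EG_v$ with cocompact $2$-skeleton, i.e.\ would presuppose the conclusion.

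I would circumvent this by a subdivision. After replacing $X$ by $X^{(3)}$, I insert a small $G$-equivariant ``sphere'' around $v$ and its translates, producing a $G$-CW-complex $X'$ in which the new cells surrounding $v$---vertices $w_e$, edges $e_f$, and $2$-cells $f_\sigma$---carry the stabilisers $G_e,G_f,G_\sigma$ of the \emph{positive-dimensional} simplices of $X$ rather than $G_v$. These new cells assemble into a copy $\lk_{X'}(v)$ of the link, and together with the cone cells one obtains $\st_{X'}(v)$; both are $G_v$-cocompact by assumption~(ii), and $X'$ carries the $G$-subcomplexes $G\times_{G_v}\lk_{X'}(v)$ and $G\times_{G_v}\st_{X'}(v)$. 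Applying Theorem~\ref{thm:blow-up} to $X'$, I choose models for $EG_w,EG_e,EG_f,EG_\sigma$ with cocompact $1$-, $2$-, $1$-, $0$-skeleton respectively, available from finite generation of $G_w$, assumption~(iv), assumption~(iii), and triviality. The skeletal budget is tuned so that a $k$-cell with stabiliser $H$ contributes to the $m$-skeleton of the blow-up only through the $(m-k)$-skeleton of $EH$; consequently $\widehat{X'}$ is free, simply-connected (as $p$ is a homotopy equivalence and $X'\simeq X$), has $G$-cocompact $1$-skeleton, and $p^{-1}(G\times_{G_v}\lk_{X'}(v))$ has $G$-cocompact $2$-skeleton.

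Since $G$ is finitely presented, Lemma~\ref{lem:deleting cells} then lets me cut $\widehat{X'}$ down to a simply-connected $2$-dimensional $G$-cocompact subcomplex $Z$ retaining the $1$-skeleton of $\widehat{X'}$ together with the $2$-skeleton of $p^{-1}(G\times_{G_v}\lk_{X'}(v))$. Restricting to the $v$-part, I set $S_Z\coloneqq Z\cap p^{-1}(\st_{X'}(v))$ and $L_Z\coloneqq Z\cap p^{-1}(\lk_{X'}(v))$. Then $S_Z$ is a connected, $2$-dimensional, $G_v$-cocompact free $G_v$-CW-complex, so the proof reduces to checking that $S_Z$ is simply-connected.

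This last point is the main obstacle and the only place where assumption~(v) enters. Because $Z$ is simply-connected, removing the open star decomposes the complement of $S_Z$ in $Z$ into one component $K_i$ per component $L_i$ of $L_Z$, and each $L_i$ is the $2$-skeleton of a component of $p^{-1}(G\times_{G_v}\lk_{X'}(v))$, homotopy equivalent to a component of the link and hence simply-connected by~(v). Covering $Z$ by thickenings of $S_Z$ and of the $K_i$ (joined to a basepoint in $S_Z$ by whiskers), whose pairwise overlaps deformation retract onto the simply-connected $L_i$, the Seifert--van Kampen theorem presents $\pi_1(Z)$ as the free product of $\pi_1(S_Z)$ with the $\pi_1(K_i)$; as $\pi_1(Z)=1$, the free factor $\pi_1(S_Z)$ is trivial. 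I expect the stabiliser bookkeeping across the subdivision and the verification of the skeletal cocompactness of $\widehat{X'}$ to be the fiddly part, while the conceptual heart is precisely this van Kampen splitting, which converts ``each link component is simply-connected'' into ``$S_Z$ is simply-connected''.
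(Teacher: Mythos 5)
Your proposal is correct and follows essentially the same route as the paper's own proof: the same equivariant ``small sphere'' subdivision around $v$, the same blow-up with the same skeletal budget, the same use of Lemma~\ref{lem:deleting cells} to extract $Z$, and the same Seifert--van Kampen splitting of $\pi_1(Z)$ over the simply-connected link components to conclude that $S_Z$ is simply-connected.
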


\begin{proof}[Proof of Theorem~\ref{thm:main Fn}]
	For~$n=1$, this follows from Proposition~\ref{prop:FPn}, since groups of type~$\sfFP_1(\IZ)$ are of type~$\sfF_1$.
	For~$n\ge 2$, this follows by combining Proposition~\ref{prop:FPn} and Proposition~\ref{prop:F2}, since finitely presented groups of type~$\sfFP_n(\IZ)$ are of type~$\sfF_n$.
\end{proof}

\section{Uncountably many groups of type~$\sfFP_n$ and not of type~$\sfFP_{n+1}$}

We conclude this note with an application of Proposition~\ref{prop:FPn}.
For~$n\in \IN$, we define the class of groups
\[
	\calC_n\coloneqq \{ \text{groups of type~$\sfFP_n(\IZ)$ and not of type~$\sfFP_{n+1}(\IZ)$}\}.
\]
An example of a group in~$\calC_n$ is given by the kernel of the group homomorphism $\prod^{n+1}(\IZ\ast \IZ)\to \IZ$ that sends every generator (in the obvious generating set) to $1\in \IZ$.
This group is finitely presented for~$n\ge 2$.

\begin{lem}
\label{lem:one-ended}
	Let~$n\ge 2$ and let~$G$ be a group in~$\calC_n$.
        Then~$G$ has a one-ended subgroup~$H$ in~$\calC_n$.
        Moreover, if~$G$ is finitely presented, then~$H$ can be taken to be finitely presented. 
	\begin{proof}
		Since groups of type~$\sfFP_2(\IZ)$ are accessible~\cite{Dunwoody85}, the group~$G$ is the fundamental group of a finite graph of groups whose edge groups are finite and vertex groups are finite or one-ended.
		Since~$G$ is of type~$\sfFP_n(\IZ)$, every vertex group is of type~$\sfFP_n(\IZ)$ by Proposition~\ref{prop:FPn}.
            Moreover, if~$G$ is finitely presented, then every vertex group is finitely presented by Corollary~\ref{cor:Haglund-Wise}.
		On the other hand, since~$G$ is not of type~$\sfFP_{n+1}(\IZ)$, there is a one-ended vertex group that is not of type~$\sfFP_{n+1}(\IZ)$ by the algebraic version of Theorem~\ref{thm:combination}~\cite[Proposition~1.1]{Brown87}.
	\end{proof}
\end{lem}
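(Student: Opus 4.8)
The plan is to realise $H$ as a vertex group in an accessibility decomposition of $G$ and then play Proposition~\ref{prop:FPn} against the algebraic combination theorem on the associated Bass--Serre tree. Since $n\ge 2$, the group $G$ is in particular of type $\sfFP_2(\IZ)$, so I would first invoke Dunwoody's accessibility theorem~\cite{Dunwoody85} to write $G$ as the fundamental group of a finite graph of groups with finite edge groups and with vertex groups that are each finite or one-ended. Let $T$ denote the corresponding Bass--Serre tree, a $1$-dimensional $G$-CW-complex. Then $T$ is contractible, hence $m$-acyclic over $\IZ$ for every $m$; the quotient $G\backslash T$ is the finite underlying graph, hence compact; and every edge stabiliser is finite and so of type $\sfFP_m(\IZ)$ for all $m$.

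Next I would extract the two finiteness conclusions. Applying Proposition~\ref{prop:FPn} to $T$ (with the tree contractible, the quotient compact, the edge groups finite, and $G$ of type $\sfFP_n(\IZ)$) shows that every vertex group is of type $\sfFP_n(\IZ)$. For the failure of $\sfFP_{n+1}(\IZ)$, I would argue by contraposition using the algebraic analogue of Theorem~\ref{thm:combination}~\cite[Proposition~1.1]{Brown87}: its hypotheses for deducing type $\sfFP_{n+1}(\IZ)$ of $G$ from the action on $T$ are exactly that $T$ be $n$-acyclic, that $G\backslash T$ be compact, that the edge (i.e.\ $1$-cell) stabilisers be of type $\sfFP_n(\IZ)$, and that the vertex (i.e.\ $0$-cell) stabilisers be of type $\sfFP_{n+1}(\IZ)$. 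The first three hold, so if all vertex groups were of type $\sfFP_{n+1}(\IZ)$ then $G$ would be too, contradicting $G\in\calC_n$. Hence some vertex group $H$ fails to be of type $\sfFP_{n+1}(\IZ)$; together with the previous paragraph, $H\in\calC_n$.

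Finally, one-endedness of $H$ is forced: in the accessibility decomposition each vertex group is finite or one-ended, and a finite group is of type $\sfFP_m(\IZ)$ for every $m$, so the vertex group $H$ that fails $\sfFP_{n+1}(\IZ)$ cannot be finite and must be one-ended. For the addendum, if $G$ is finitely presented I would apply Corollary~\ref{cor:Haglund-Wise} to the same decomposition with $n=2$: then $G$ is of type $\sfF_2$ and the finite edge groups are of type $\sfF_2$, so every vertex group, in particular $H$, is of type $\sfF_2$, i.e.\ finitely presented. I do not expect a serious obstacle here; the only points needing care are checking that $T$ meets the precise hypotheses of both Proposition~\ref{prop:FPn} and the algebraic combination theorem simultaneously, and observing that the vertex group detecting the failure of $\sfFP_{n+1}(\IZ)$ must be one-ended rather than finite.
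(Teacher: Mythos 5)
Your proposal is correct and follows essentially the same route as the paper: Dunwoody accessibility to get a finite graph of groups with finite edge groups and finite-or-one-ended vertex groups, Proposition~\ref{prop:FPn} applied to the Bass--Serre tree to get type~$\sfFP_n(\IZ)$ of the vertex groups, the algebraic combination theorem~\cite[Proposition~1.1]{Brown87} in contrapositive form to find a vertex group failing~$\sfFP_{n+1}(\IZ)$ (necessarily one-ended since finite groups are of type~$\sfFP_m$ for all~$m$), and Corollary~\ref{cor:Haglund-Wise} for the finitely presented addendum. The only difference is that you spell out the verification of the hypotheses in more detail than the paper does.
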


Finiteness properties are quasi-isometry invariants~\cite{Alonso94}.
It is natural to ask how many (one-ended) groups there are in~$\calC_n$, up to quasi-isometry.
For \mbox{$n\ge 1$}, countably infinitely many one-ended groups in~$\calC_n$ that are pairwise not quasi-isometric were exhibited by Patil~\cite[Theorem~4.2]{Patil23}.
For~$n\ge 2$, we improve this result to uncountably many groups.
We construct the desired groups as small cancellation quotients of free products relying on work of Mart\'inez-Pedroza and the second author~\cite{MPSS24}.

\begin{thm}
\label{thm:uncountably many}
	Let $n\geq 2$. 
	The class~$\calC_n$ contains uncountably many one-ended groups that are pairwise not quasi-isometric.
	\begin{proof}
		Let~$A$ be a finitely presented one-ended group in~$\calC_n$ (Lemma~\ref{lem:one-ended}). 
		Let~$(B_i)_{i\in \IR}$ be the collection of one-ended groups of type~$\sfFP(\IZ)$ that are pairwise not quasi-isometric provided by Kropholler--Leary--Soroko~\cite[Corollary~1.3]{KLS20}.
		For every~$i\in \IR$, let~$\calR_i$ be a finite symmetrised subset of~$A\ast B_i$ satisfying the $C'(1/12)$ small cancellation condition and consider the group
		\[
			G_i\coloneqq (A\ast B_i)/\langle\langle \calR_i \rangle\rangle.
		\]
		Let~$i\in \IR$. 
		Since~$A$ and~$B_i$ are one-ended, the group~$G_i$ is one-ended~\cite[Theorem~5.1]{MPSS24}.
		There exists a contractible 2-dimensional cocompact $G_i$-CW-complex whose stabilisers of 1- and 2-cells are finite and vertex stabilisers are conjugates of~$A$ and of~$B_i$~\cite[Proposition~4.3]{MPSS24}.
		Then, since~$A$ is of type~$\sfFP_n(\IZ)$ and~$B_i$ is of type~$\sfFP(\IZ)$, the group~$G_i$ is of type~$\sfFP_n(\IZ)$~\cite[Proposition~1.1]{Brown87}.
		On the other hand, since~$A$ is not of type~$\sfFP_{n+1}(\IZ)$, the group~$G_i$ is not of type~$\sfFP_{n+1}(\IZ)$ by Proposition~\ref{prop:FPn}.
		Together, the group~$G_i$ lies in~$\calC_n$.

		It remains to show that the groups~$G_i$ and~$G_j$ are not quasi-isometric for~$i,j\in \IR$ with~$i\neq j$.
		We use a quasi-isometry invariant due to Bowditch~\cite{Bowditch98}, called the \emph{taut loop length spectrum}, that assigns to a finitely generated group~$G$ a subset~$H(G)\subset \IN$, up to a suitable relation~$\sim$ on the power set of~$\IN$.
		It follows from the definitions that~$H(G)\sim \emptyset$ if~$G$ is finitely presented and that the relation~$\sim$ is compatible with taking finite unions.
		By~\cite[Theorem~2.4]{MPSS24}, we have $H(G_i)\sim H(A)\cup H(B_i)$ for all~$i\in \IR$.
		Since~$A$ is finitely presented, we  obtain $H(G_i)\sim H(B_i)$.
		By construction of the groups~$(B_i)_{i\in \IR}$, we have $H(B_i)\not\sim H(B_j)$ for~$i\neq j$~\cite[Section~6]{KLS20}.
            Thus, we conclude $H(G_i)\not\sim H(G_j)$ for~$i\neq j$.
	\end{proof}
\end{thm}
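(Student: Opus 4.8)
The plan is to realise the desired groups as small cancellation quotients of free products~$A\ast B_i$, where a single factor~$A$ supplies the finiteness grading and an uncountable family of factors~$(B_i)$ supplies the distinct quasi-isometry types. First I would apply Lemma~\ref{lem:one-ended} to the explicit finitely presented group in~$\calC_n$ recorded above, obtaining a \emph{finitely presented} one-ended group~$A\in\calC_n$. For the second ingredient I would take the family $(B_i)_{i\in\IR}$ of one-ended groups of type~$\sfFP(\IZ)$ that are pairwise not quasi-isometric, constructed by Kropholler--Leary--Soroko. For each~$i$ I would pick a finite symmetrised subset~$\calR_i$ of $A\ast B_i$ satisfying the $C'(1/12)$ condition and set $G_i\coloneqq(A\ast B_i)/\langle\langle\calR_i\rangle\rangle$. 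The design principle, drawn from the work of Mart\'inez-Pedroza and the second author, is that this quotient still acts on a contractible $2$-dimensional cocompact $G_i$-CW-complex whose positive-dimensional cell stabilisers are finite and whose vertex stabilisers are conjugates of~$A$ and of~$B_i$.

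Granting such an action, the membership $G_i\in\calC_n$ is the routine part. Both~$A$ (of type~$\sfFP_n(\IZ)$) and~$B_i$ (of type~$\sfFP(\IZ)$) are of type~$\sfFP_n(\IZ)$, so the algebraic combination theorem (the $\sfFP$-analogue of Theorem~\ref{thm:combination}) forces~$G_i$ to be of type~$\sfFP_n(\IZ)$. For the failure of~$\sfFP_{n+1}(\IZ)$ I would argue by contradiction: if~$G_i$ were of type~$\sfFP_{n+1}(\IZ)$, then applying Proposition~\ref{prop:FPn} (with~$n$ replaced by~$n+1$) to the same contractible cocompact $2$-complex --- whose finite edge and face stabilisers are automatically of type~$\sfFP_{n+1}(\IZ)$ --- would make every vertex stabiliser, in particular~$A$, of type~$\sfFP_{n+1}(\IZ)$, contradicting $A\in\calC_n$. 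That each~$G_i$ is one-ended follows because a small cancellation quotient of a free product of one-ended groups is again one-ended.

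The main obstacle is to certify that uncountably many of the~$G_i$ are pairwise non-quasi-isometric; a cardinality count does not suffice, so I need a genuine quasi-isometry invariant. Here I would invoke Bowditch's taut loop length spectrum~$H(-)\subseteq\IN$, considered up to the coarse relation~$\sim$, and exploit two of its features: it is trivial, $H(G)\sim\emptyset$, for finitely presented~$G$, and it is additive along the free-product-with-small-cancellation construction, giving $H(G_i)\sim H(A)\cup H(B_i)$. Because~$A$ is finitely presented, $H(A)\sim\emptyset$, so $H(G_i)\sim H(B_i)$; since the Kropholler--Leary--Soroko groups satisfy $H(B_i)\not\sim H(B_j)$ for $i\neq j$, I conclude $H(G_i)\not\sim H(G_j)$, whence the~$G_i$ fall into uncountably many quasi-isometry classes, all inside~$\calC_n$. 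The conceptual crux is thus the interplay between the combination construction and this invariant: one needs a factor with controlled --- finite but not too high --- finiteness properties, namely~$A$, whose finite presentability makes its $H$-contribution vanish, so that the uncountably many distinct spectra of the~$B_i$ survive intact into the quotients.
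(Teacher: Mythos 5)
Your proposal is correct and follows essentially the same route as the paper: the same choice of $A$ via Lemma~\ref{lem:one-ended} and of the $(B_i)$ from Kropholler--Leary--Soroko, the same small cancellation quotients with the associated contractible cocompact $2$-complex, the algebraic combination theorem for type~$\sfFP_n$, Proposition~\ref{prop:FPn} (stated by you in contrapositive form) for the failure of~$\sfFP_{n+1}$, and Bowditch's taut loop length spectrum to separate quasi-isometry classes. No substantive differences.
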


\bibliographystyle{alpha}
\bibliography{bib}

\setlength{\parindent}{0cm}

\end{document}